\documentclass[12pt,reqno,a4paper]{amsart}
\usepackage{hyperref}
\usepackage{enumerate}
\usepackage[left=0.7in, right=0.7in, top= 0.9in, bottom=1.7in]{geometry}
\usepackage{amsmath,amssymb,amsthm,amsfonts}
\usepackage[dvipsnames]{xcolor}

\usepackage{graphicx}
\usepackage{tikz}
\usetikzlibrary{graphs, graphs.standard}
\usetikzlibrary{positioning, shapes.geometric, calc}

\usepackage{tikz}
\usetikzlibrary{positioning}

	\usepackage{verbatim}
\usetikzlibrary{arrows,shapes}
\usetikzlibrary{positioning, 
	quotes}

\setlength{\parindent}{0pt}
\setlength{\parskip}{0.45em}

 \linespread{1.05}

\tikzset{
	modal/.style={>=stealth’,shorten >=1pt,shorten <=1pt,auto,node distance=1.5cm,
		semithick},
	world/.style={circle, draw,minimum size=.1cm,fill=gray!15},
	point/.style={circle,draw,inner sep=0.3mm,fill=black},
	circ/.style={circle,draw,inner sep=0.1mm,fill=white},
	reflexive above/.style={->,loop,looseness=7,in=120,out=60},
	reflexive below/.style={->,loop,looseness=7,in=240,out=300},
	reflexive left/.style={->,loop,looseness=7,in=150,out=210},
	reflexive right/.style={->,loop,looseness=7,in=30,out=330}
}
\pgfdeclarelayer{background}
		\pgfsetlayers{background,main}
		
		\tikzstyle{vertex}=[circle,fill=black!25,minimum size=14pt,inner sep=0pt]
		\tikzstyle{selected vertex} = [vertex, fill=red!24]
		\tikzstyle{edge} = [draw,thick,-]
		\tikzstyle{weight} = [font=\small]
		\tikzstyle{selected edge} = [draw,line width=5pt,-,red!50]
		\tikzstyle{ignored edge} = [draw,line width=5pt,-,black!20]
\usetikzlibrary{shapes}
\usetikzlibrary{plotmarks}
\usetikzlibrary{arrows}
\usetikzlibrary{positioning}
\theoremstyle{definition}
\newtheorem{defn}{Definition}[section]
\newtheorem{fact}[defn]{Fact}
\newtheorem{prop}[defn]{Proposition}
\newtheorem{thm}[defn]{Theorem}

\newtheorem{lem}[defn]{Lemma}

\newtheorem{remark}[defn]{Remark}

\newtheorem{claim}[defn]{Claim}
\setlength{\textheight}{1.1\textheight}
\title[Automorphisms and Distinguishing Colorings of Central and Middle Graphs]{Automorphism groups and Distinguishing Colorings of Central and Middle Graphs}

\author{Amitayu Banerjee$^{\ast}$, Alexa Gopaulsingh, Zalán Molnár}
\thanks{$^{\ast}$ Corresponding author.
In this manuscript, all results except Propositions \ref{Proposition 3.2} and \ref{Proposition 3.3} were obtained and written by the first author. The ideas underlying Propositions \ref{Proposition 3.2} and \ref{Proposition 3.3} originate from an earlier version of the manuscript by all authors, where they appeared in the form of diagrams. The propositions are formalized and proved here.}

\address{E\"otv\"os Lor\'and University, Budapest, Hungary}
\email{banerjee.amitayu@gmail.com, alexa279e@gmail.com, mozaag@gmail.com}

\date{}

\makeatletter
\@namedef{subjclassname@2020}{\textup{2020} Mathematics Subject Classification}
\makeatother
\subjclass[2020]{Primary 05C15; Secondary 05C25, 05C76.}
\keywords{Automorphism group, 
distinguishing number, distinguishing index, total distinguishing chromatic number, central graph, middle graph}
\begin{document}

\begin{abstract}
Let $G$ be a simple, finite, connected, and undirected graph.
The {\em middle graph $M(G)$} of $G$ is obtained from the subdivision graph $S(G)$ by joining pairs of subdivided vertices that lie on adjacent edges of $G$, while the {\em central graph} $C(G)$ of $G$ is obtained from $S(G)$ by joining all non-adjacent vertices of $G$.
We show that if the order of $G$ is at least $4$, then
$
\operatorname{Aut}(G)\cong \operatorname{Aut}(C(G))\cong \operatorname{Aut}(M(G))
$
where $\operatorname{Aut}(G)$ denotes the automorphism group of $G$.
Using this result, we obtain new upper bounds for the distinguishing number and the distinguishing index of $C(G)$ and $M(G)$, and provide examples showing that these bounds cannot be improved in general.
Moreover, we use idempotent commutative Latin squares and a theorem of Galvin on list edge colorings of bipartite graphs to study the total distinguishing chromatic number of central graphs.
\end{abstract}

\maketitle
\section{Introduction}
All graphs considered in this paper are finite, simple, connected, and undirected.
Let $G = (V(G), E(G))$ be a connected graph with the vertex set $V(G)$ and the edge set $E(G)$. 
Let $\Delta(G)$ represent the maximum degree of $G$. Let $C_{n}$ be a cycle graph of $n$ vertices, $P_{n}$ be a path of $n$ vertices, $K_{n}$ be a complete graph of $n$ vertices, and $K_{n,m}$ be a complete bipartite graph with bipartitions of sizes $n$ and $m$. A {\em claw} is the graph $K_{1,3}$.
The {\em degree} of a vertex $v$ in $G$, denoted by $deg_{G}(v)$, is the number of edges that emerge from $v$.
We say $G$ is a {\em regular graph} if all vertices of $G$ have the same degree. The degree of a regular graph $G$, denoted by $d(G)$, is the same as the maximum degree $\Delta(G)$ and the minimum degree $\delta(G)$. Let $N_{G}(v) =\{u \in V (G) : uv \in E(G)\}$ for any $v\in V(G)$.
The \emph{middle graph} $M(G)$ and the \emph{central graph} $C(G)$, introduced by Hamada and Yoshimura~\cite{HY1976} and Vernold~\cite{Ver2007}, respectively, are classes of graphs derived from subdivision graphs. These graph constructions have been extensively studied; see, for example, \cite{BMG2026, BFKP2026, CSW2020, Eff2017,  KK2021, KM2019, LYF2025, PVK2020}.

\begin{defn}
The \emph{subdivision graph} $S(G)$ of $G$ is obtained from $G$ by putting a new vertex in the middle of every edge of $G$.
For each edge $xy \in E(G)$, introduce a new vertex $w_{xy}$ (called the subdivision vertex corresponding to the edge $xy$) and replace the edge $xy$ by the two edges $xw_{xy}$ and $w_{xy}y$. Thus,
$
V(S(G)) = V(G) \cup \{ w_{xy} : xy \in E(G) \},
$
and
$
E(S(G)) = \{ xw_{xy},\, w_{xy}y : xy \in E(G) \}.
$
The {\em middle graph} $M(G)$ of $G$ is the graph whose vertex set is $V(G) \cup E(G)$,
and two vertices $x, y$ in the vertex set of $M(G)$ are adjacent in $M(G)$ in the case where one of the following holds:
\begin{enumerate}
\item $x, y \in E(G)$ and $x, y$ are adjacent in $G$;
\item $x \in V (G), y \in E(G)$, and $x, y$ are incident in $G$.
\end{enumerate}

The {\em central graph $C(G)$} of $G$ with order $n$ and size $m$ is a graph of order $n+m$ and size $n\choose{2}$ + $m$ that is obtained from
$S(G)$ by adding the edges in $E(\overline{G})$, where $\overline{G}$ is the complement graph of $G$. 
\end{defn}

\begin{defn}
Let $\operatorname{Aut}(G)$ be the automorphism group of $G$, $id_{G}$ be the trivial automorphism of $G$, and $f$ be a vertex coloring or an edge coloring or a total coloring (that is, a coloring of its vertices and edges) of $G$.
An automorphism $\phi$ of $G$ {\em preserves} $f$ if the following holds: 
\begin{enumerate}
    \item $f(v)=f(\phi(v))$ for all $v\in V(G)$ if $f$ is a vertex coloring, 
    \item $f(e)=f(\phi(e))$ for all $e\in E(G)$ if $f$ is an edge coloring,
    \item $f(x)=f(\phi(x))$ for all $x\in V(G)\cup E(G)$ if $f$ is a total coloring.
\end{enumerate}    
\end{defn}

The notions of \emph{distinguishing number} and \emph{distinguishing index} were introduced by Albertson and Collins~\cite{AC1996} and Kalinowski and Pilsniak~\cite{KP2015}, respectively.
Kalinowski, Pili\'{s}niak, and Wo\'{z}niak~\cite{KPW2016} introduced the notion of \emph{total distinguishing chromatic number}.

\begin{defn}
The \emph{distinguishing number (index)} of a graph $G$, denoted by $D(G)$ ($D'(G)$), is the least integer $d$ such that $G$ admits a vertex coloring (edge coloring) with $d$ colors that is preserved only by $id_G$. 
The \emph{total distinguishing chromatic number} $\chi''_{D}(G)$ of a graph $G$ is the least integer $d$ such that $G$ admits a proper total coloring with $d$ colors that is preserved only by $id_G$. 
\end{defn}

\subsection{Motivation}
Alikhani and Soltani \cite[Theorems 3.8 and 4.3]{AS2017} studied 
the distinguishing number and distinguishing index of subdivision graphs and proved that if $G$ is a connected graph of order $n\ge 3$, then
\[
D(S(G))\le \left\lceil \sqrt{\Delta(G)}\right\rceil
\quad \text{and} \quad
D'(S(G))\le \min\{r: r^2-r\ge D'(G)\},
\]
where $\Delta(G)$ denotes the maximum degree of $G$.
Mirafzal~\cite[Theorem 4]{Mir2024} determined the automorphism group of $S(G)$ and proved that if 
$G \not\cong C_n$ for any natural number $n\geq 3$, then
$
\operatorname{Aut}(G)\cong \operatorname{Aut}(S(G)).
$
Kalinowski--Pil\'{s}niak--Wo\'{z}niak~\cite[Theorem 4.2]{KPW2016} proved that every connected graph $G$ satisfies
$
\chi''_{D}(G)\leq \chi''(G)+1,
$
and moreover,
\[
\chi''_{D}(G)=\chi''(G)
\quad \text{whenever} \quad
\chi''(G)\geq \Delta(G)+2.
\]

Although automorphism groups and distinguishing parameters of subdivision graphs have been studied in \cite{AS2017,Mir2024}, the corresponding results for central and middle graphs remain unexplored.
Moreover, the total distinguishing chromatic number of central graphs has not been investigated.
These observations motivate the present work.

\subsection{Main Results}
Let $G$ be a connected graph of order $n$. We prove the following:
\begin{enumerate}
    \item (Theorem \ref{Theorem 2.9}) If $n\geq 4$, then $\operatorname{Aut}(G)\cong \operatorname{Aut}(C(G))\cong \operatorname{Aut}(M(G))$.
\vspace{4mm}

    \item (Theorem \ref{Theorem 3.4}) If $n\ge 4$, then
$
D'(C(G))\le \left\lceil \sqrt{\Delta(G)}\right\rceil.
$
The proof integrates Lemmas \ref{Lemma 2.1}, \ref{Lemma 2.2}, and \ref{Lemma 2.3} with a coloring strategy arising from the structure of central graphs and a modification of an algorithm of Kalinowski, Pil\'{s}niak, and Wo\'{z}niak \cite[Proof of Theorem 2.2]{KPW2016}.

\vspace{2mm} 

    \item (Theorem \ref{Theorem 3.6}) If $n\ge 4$, then 
    $
    D(C(G))\le \left\lceil \sqrt{\Delta(G)}\right\rceil$.
\vspace{4mm}      

    \item (Theorem \ref{Theorem 3.8}(1)) If $n\ge 3$, then $D(M(G))\le \Delta(G)$.
\vspace{4mm}

    \item (Theorem \ref{Theorem 3.8}(2)) $D'(M(G))\le 3$.
\vspace{4mm}

     \item (Theorem \ref{Theorem 4.5}(1)) If $G$ is regular and $n\ge 5$ is odd, then
$
\chi''_{D}(C(G))=\Delta(C(G))+1.
$
The proof combines a construction based on idempotent commutative Latin squares with a theorem of Galvin \cite{Gal1995} on the list chromatic index of bipartite graphs.
\vspace{4mm} 

    \item (Theorem \ref{Theorem 4.5}(2)) If $G$ is a non-complete regular graph of even order $n\ge 5$ satisfying 
    $\Delta(G)\le \frac{2n-29}{6}$, then 
    $\chi''_{D}(C(G))=\Delta(C(G))+1$.
\end{enumerate}

In Proposition \ref{Proposition 4.4}, we also observe that $\Delta(C(G))+1\le \chi''(C(G))\le \chi''_{D}(C(G))\le \Delta(C(G))+2$.
Moreover, the bounds in (2), (3), and (4) cannot be improved in general, as equality holds for certain graph classes (see Remark~\ref{Remark 3.7} and Theorem~\ref{Theorem 3.8}).

\section{Automorphism groups}

In this section, we sketch some lemmas concerning $\operatorname{Aut}(C(G))$ and $\operatorname{Aut}(M(G))$. 
These lemmas will be used to prove Propositions~\ref{Proposition 3.2} and~\ref{Proposition 3.3}, as well as the main results, namely Theorems~\ref{Theorem 3.4}, \ref{Theorem 3.6}, and~\ref{Theorem 3.8}.
Recall that for each edge $xy \in E(G)$, $w_{xy}$ denotes the subdivision vertex corresponding to the edge $xy$ in the subdivision graph $S(G)$. In $S(G)$, let $V_1 = V(G)$ be the set of all vertices of $G$, and let $V_2 = \{w_{xy} : xy \in E(G)\}$ denote the set of all subdivision vertices.

\begin{lem}\label{Lemma 2.1}
    {\em If $\psi\in \operatorname{Aut}(C(G))$ such that $\psi(u)=u$ for all $u\in V_{1}$, then $\psi=id_{C(G)}$.}
\end{lem}

\begin{proof}
First, we observe that distinct vertices of $V_2$ have distinct neighborhoods in $C(G)$. 
Indeed, if $w_{xy}, w_{uv} \in V_2$ with $w_{xy} \neq w_{uv}$, then the sets $\{x,y\}$ and  $\{u,v\}$ are distinct, since $G$ is a simple graph, and hence each edge of $G$ has a unique pair of endpoints. Therefore,
\[
N_{C(G)}(w_{xy}) = \{x,y\} \neq \{u,v\} = N_{C(G)}(w_{uv}).
\]
Now, let $p\in V_2$. If $\psi(p)=q$ for some $q\in V_2\setminus\{p\}$, 
then $\psi(N_{C(G)}(p))=N_{C(G)}(q)$ since automorphisms preserve adjacency. 
Since $N_{C(G)}(p)\subseteq V_1$ and $\psi(v)=v$ for every $v\in V_1$, 
it follows that $\psi(N_{C(G)}(p))=N_{C(G)}(p)$. 
Thus, $N_{C(G)}(p)=N_{C(G)}(q)$, a contradiction.
Since $\psi(u)=u$ for all $u\in V_1$, we have $\psi(V_1)=V_1$, and hence $\psi(p)\notin V_1$. 
Therefore, $\psi(p)=p$ for all $p\in V_2$, and consequently $\psi=id_{C(G)}$.
\end{proof}

\begin{lem}\label{Lemma 2.2}
{\em Let $G$ be a connected graph of order $n\geq 4$. 
If $\varphi\in \operatorname{Aut}(C(G))$, then $\varphi(V_{1})=V_{1}$ and $\varphi(V_{2})=V_{2}$.}
\end{lem}

\begin{proof}
In $C(G)$, each vertex of $V_{1}$ has degree $n-1\geq 3$, 
whereas each vertex of $V_{2}$ has degree $2$. 
Since automorphisms preserve vertex degrees, that is,
$deg_{C(G)}(\varphi(v))=deg_{C(G)}(v)$ for any $v\in V(C(G))$, 
it follows that $\varphi(V_{1})=V_{1}$ and $\varphi(V_{2})=V_{2}$.
\end{proof}

\begin{lem}\label{Lemma 2.3}
    {\em Let $G$ be a connected graph of order $n\geq 4$. Then $\operatorname{Aut}(G)\cong \operatorname{Aut}(C(G))$.}
\end{lem}

\begin{proof}
    Let $\alpha\in \operatorname{Aut}(G)$.  
Define the mapping $\pi_{\alpha}: V(C(G)) \rightarrow V(C(G))$ as follows:
    \begin{align*}
       \pi_{\alpha}(w) =
       \begin{cases}
           \alpha(w) & \text{if } w\in V(G)\\
           w_{\alpha(x) \alpha(y)} & \text{if } w\in V(C(G))\backslash V(G), w=w_{xy}, x,y\in V(G).
       \end{cases}
    \end{align*}    
We can see that $\pi_{\alpha}\in \operatorname{Aut}(C(G))$. If $xy\in E(C(G))$ then either $xy\in E(\overline{G})$ or $xy\in E(S(G))$.
By the definition of $\pi_{\alpha}$, if $xy\in E(S(G))$ then $\pi_{\alpha}(x)\pi_{\alpha}(y)\in E(S(G))$. 
Moreover, we have

\begin{align*}
xy \in E(\overline{G}) &\iff xy \notin E(G) \\
&\iff \alpha(x)\alpha(y) \notin E(G) \\
&\iff \alpha(x)\alpha(y) \in E(\overline{G}) \\
&\iff \pi_\alpha(x)\pi_\alpha(y) \in E(\overline{G}).
\end{align*}

Thus, if $H = \{\pi_{\alpha} : \alpha \in \operatorname{Aut}(G)\}$, then $H\subseteq \operatorname{Aut}(C(G))$. We 
show that $\operatorname{Aut}(C(G))\subseteq H$. Let $f\in \operatorname{Aut}(C(G))$. By Lemma \ref{Lemma 2.2}, we have $f(V_{1})=V_{1}$ since $G$ is a connected graph of order at least $4$. 
Thus, if $\alpha=f\restriction V_{1}$, then $\alpha\in \operatorname{Aut}(G)$. Let $\pi_{\alpha}$ be the induced automorphism of $\alpha$ as defined above. 
If we let $l = f^{-1}\circ \pi_{\alpha}$, then $l$ is an automorphism of $C(G)$ such that $l(v) = v$ for every $v \in V_{1}$. By Lemma \ref{Lemma 2.1}, $l=id_{C(G)}$. Consequently, $f=\pi_{\alpha}$. Therefore, $\operatorname{Aut}(C(G))\subseteq H$, and so $H=\operatorname{Aut}(C(G))$. It is trivial to see that $H\cong \operatorname{Aut}(G)$, so we are done.
\end{proof}

The following definition is due to Hamada and Yoshimura \cite{HY1976}.

\begin{defn}\label{Definition 2.4}
Let $G=(V(G), E(G))$ be a graph where $V(G) = \{v_{1},..., v_{p}\}$ for some integer $p\geq 1$. To $G$, we add
$p$ new vertices $\{u_{1},..., u_{p}\}$ and $p$ new edges $u_{i} v_{i}$. The new graph is the {\em endline graph} of $G$ denoted by $G^{+}$. The edges $u_{i} v_{i}$ are the {\em endline edges} of $G^{+}$. Let $U_{1}=V(G)$ and $U_{2}=V(G^{+})\backslash V(G)$.
Let $L(G)$ be the line graph of $G$. 
\end{defn}

\begin{fact}\label{Fact 2.5}
{\em The following holds:
\begin{enumerate}
    \item (Hamada--Yoshimura; \cite{HY1976}) Let $G$ be any graph. Then $M(G) \cong L(G^{+})$.
    \item (Sabidussi; \cite{Sab1961}) If $G$ is a connected graph where $G\not\in \{L(Q), Q,P_{2}\}$ (see Fig. \ref{Figure 1}). Then $\operatorname{Aut}(G) \cong \operatorname{Aut}(L(G))$ where $L(G)$ is the line graph of $G$.
    \item {(Mirafzal; \cite[Theorem 4]{Mir2024})} If $G \not\cong C_{n}$ for any $n\geq 3$, then $\operatorname{Aut}(S(G)) \cong \operatorname{Aut}(G)$.
\end{enumerate}
}
\end{fact}

\begin{figure}[!ht]
\centering
\begin{minipage}{\textwidth}
\centering
\begin{tikzpicture}[scale=0.65]
\draw[black,] (-10,-1) -- (-9,-2);
\draw[black,] (-10,-1) -- (-11,-2);
\draw[black,] (-10,-3) -- (-11,-2);
\draw[black,] (-10,-3) -- (-9,-2);
\draw[black,] (-10,-3) -- (-10,-1);

\draw (-10,-1) node {$\bullet$};
\draw (-9,-2) node {$\bullet$};
\draw (-11,-2) node {$\bullet$};
\draw (-10,-3) node {$\bullet$};

\draw (-10,-4) node {$L(Q)$};
\draw[black,] (-7,-1) -- (-6,-2);
\draw[black,] (-7,-3) -- (-6,-2);
\draw[black,] (-7,-3) -- (-7,-1);
\draw[black,] (-6,-2) -- (-5,-2);

\draw (-7,-1) node {$\bullet$};
\draw (-6,-2) node {$\bullet$};
\draw (-5,-2) node {$\bullet$};
\draw (-7,-3) node {$\bullet$};

\draw (-6,-4) node {$Q$};
\draw[black,] (-3,-1.5) -- (-1,-1.5);
\draw[black,] (-3,-2.5) -- (-1,-2.5);
\draw[black,] (-3,-2.5) -- (-3,-1.5);
\draw[black,] (-1,-2.5) -- (-1,-1.5);
\draw[black,] (-3,-1.5) -- (-3,-1);
\draw[black,] (-3,-2.5) -- (-3,-3);
\draw[black,] (-2,-1.5) -- (-2,-1);
\draw[black,] (-2,-2.5) -- (-2,-3);
\draw[black,] (-1,-1.5) -- (-1,-1);
\draw[black,] (-1,-2.5) -- (-1,-3);
\draw (-3,-1.5) node {$\bullet$};
\draw (-3,-2.5) node {$\bullet$};
\draw (-1,-1.5) node {$\bullet$};
\draw (-1,-2.5) node {$\bullet$};
\draw (-2,-1.5) node {$\bullet$};
\draw (-2,-2.5) node {$\bullet$};
\draw (-3,-1) node {$\bullet$};
\draw (-3,-3) node {$\bullet$};
\draw (-1,-1) node {$\bullet$};
\draw (-1,-3) node {$\bullet$};
\draw (-2,-1) node {$\bullet$};
\draw (-2,-3) node {$\bullet$};

\draw (-2,-4) node {$C_{6}^{+}$};

\end{tikzpicture}
\end{minipage}
\caption{\em Graphs $L(Q), Q,$ and $C_{6}^{+}$.}
\label{Figure 1}
\end{figure}

\begin{lem}\label{Lemma 2.6}
{\em If $\psi\in \operatorname{Aut}(G^{+})$ such that $\psi(u)=u$ for all $u\in U_{1}$, then $\psi=id_{G^{+}}$.}
\end{lem}

\begin{proof}
Let $u_i\in U_2$. Suppose $\psi(u_i)=u_j$ for some $u_j\in U_2$ with $j\ne i$. 
Since automorphisms preserve adjacency and $u_i$ is adjacent only to $v_i\in U_1$, it follows that $\psi(v_i)=v_j$. 
However, by assumption $\psi(v_i)=v_i$ for all $v_i\in U_1$, which is a contradiction. 
Thus $\psi(u_i)=u_i$ for all $u_i\in U_2$, and hence $\psi=id_{G^{+}}$.
\end{proof}

\begin{lem}\label{Lemma 2.7}
{\em If $G$ is a non-trivial connected graph and $\varphi\in \operatorname{Aut}(G^{+})$, then $\varphi(U_{i})=U_{i}$ for $i\in\{1,2\}$.}
\end{lem}

\begin{proof}
In $G^{+}$, each vertex of $U_{1}$ has degree at least $2$, while each vertex of $U_{2}$ has degree $1$. 
Since automorphisms preserve vertex degrees, it follows that $\varphi(U_{1})=U_{1}$ and $\varphi(U_{2})=U_{2}$.
\end{proof}

\begin{lem}\label{Lemma 2.8}
    {\em Let $G$ be a connected graph of order $n\geq 3$. Then $\operatorname{Aut}(G)\cong \operatorname{Aut}(M(G))$.
    }
\end{lem}

\begin{proof}
Since $G^{+}\not\in \{Q, L(Q),P_{2}\}$ as $G^{+}$ has $n$ vertices of degree one (see Fig. \ref{Figure 1}), 
$\operatorname{Aut}(M(G))\cong \operatorname{Aut}(L(G^{+}))$ by the Fact \ref{Fact 2.5}(1), and $\operatorname{Aut}(L(G^{+}))\cong \operatorname{Aut}(G^{+})$ by the Fact \ref{Fact 2.5}(2).
Let $\alpha\in \operatorname{Aut}(G)$. 
Define the mapping $\pi_{\alpha}: V(G^{+}) \rightarrow V(G^{+})$ by the following rule:

    \begin{align*}
       \pi_{\alpha}(w) =
       \begin{cases}
           \alpha(v_{i}) & \text{if } w=v_{i}\in U_{1}\\
           u_{j} & \text{if } w=u_{i}\in U_{2}, v_{j}= \alpha(v_{i}).
       \end{cases}
    \end{align*}
     
Then $\pi_{\alpha}\in \operatorname{Aut}(G^{+})$, where $\pi_{\alpha}(U_{i})=U_{i}$ for $i\in\{1,2\}$. The rest follows by applying Lemmas \ref{Lemma 2.6}, \ref{Lemma 2.7}, and the arguments in the proof of Lemma \ref{Lemma 2.3}.
\end{proof}

By Lemmas~\ref{Lemma 2.3} and \ref{Lemma 2.8}, together with Fact~\ref{Fact 2.5}(2,3), we obtain the following result.

\begin{thm}\label{Theorem 2.9}
{\em If $G$ is a connected graph of order $n\geq 5$ and $G \not\cong C_{n}$, then $\operatorname{Aut}(G) \cong \operatorname{Aut}(L(G)) \cong \operatorname{Aut}(S(G))\cong \operatorname{Aut}(C(G))\cong \operatorname{Aut}(M(G))$.}
\end{thm}

\begin{proof}
Since $n\ge 5$, we have $G\not\cong P_2, Q,$ or $L(Q)$. Hence, by Fact~\ref{Fact 2.5}(2),
$
\operatorname{Aut}(G)\cong \operatorname{Aut}(L(G)).
$
Since $G\not\cong C_n$, Fact~\ref{Fact 2.5}(3) implies that
$
\operatorname{Aut}(S(G))\cong \operatorname{Aut}(G).
$
Moreover, by Lemma~\ref{Lemma 2.3} and Lemma~\ref{Lemma 2.8}, we have
$
\operatorname{Aut}(C(G))\cong \operatorname{Aut}(G) \text{ and }  \operatorname{Aut}(M(G))\cong \operatorname{Aut}(G).
$
\end{proof}

\section{Distinguishing number and distinguishing index} 

In \cite{KPW2016}, Kalinowski, Pil\'{s}niak, and Wo\'{z}niak  introduced the {\em total distinguishing number} $D''(G)$ of a graph $G$ as the least integer $d$ such that $G$ has a total coloring (not necessarily proper) with $d$ colors that is preserved only by $id_{G}$.

\begin{fact}\label{Fact 3.1}
{\em Let $G$ be a connected graph of order $n$. The following holds:
\begin{enumerate}
    \item (Kalinowski, Pil\'{s}niak, and Wo\'{z}niak; \cite[Theorem 2.2]{KPW2016}) If $n\geq 3$, then $D''(G)\leq \lceil \sqrt{\Delta(G)}\rceil$.
    
    \item (Alikhani and Soltani; \cite[Theorems 3.8 and 4.3]{AS2017}) If $n\geq 3$, then $D(S(G))\leq \lceil \sqrt{\Delta(G)}\rceil$ and $D'(S(G))\leq min\{r : r^{2} - r \geq D'(G)\}$.

    \item (Kalinowski and Pil\'{s}niak; \cite[Propositions 5 and 6]{KP2015}) 
    $D'(C_{t})=2$ and $D'(K_{t})=2$
    for $t\geq 6$.

    \item (Alikhani and Soltani; \cite[Theorem 2.3]{AS2020}) If $G\not\in \{P_{2}, L(Q)\}$, then $D(L(G))=D'(G)$ (see Figure \ref{Figure 1} for the graph $L(Q)$).

    \item (Kalinowski and Pil\'{s}niak; \cite{KP2015}) If $n\geq 3$, then $D'(G)\leq \Delta(G)$ unless $G$ is isomorphic to $C_{3}, C_{4},$ or $C_{5}$. 
\end{enumerate}
}
\end{fact}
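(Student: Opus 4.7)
The plan is minimal: Fact~\ref{Fact 3.1} collects five results, each attributed to a specific previously-published paper, namely \cite{KPW2016} for item (1), \cite{AS2017} for item (2), \cite{KP2015} for items (3) and (5), and \cite{AS2020} for item (4). My ``proof'' will therefore be a pointer to the cited sources, as is standard for a \emph{fact} consisting entirely of attributed external results; no original combinatorial argument is required here, and in the present manuscript the statement will only be used as a black box in the proofs of Theorems~\ref{Theorem 3.2}, \ref{Theorem 3.4}, and \ref{Theorem 3.6}.

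If one wanted to reconstruct the five bounds from scratch, the unifying idea behind (1), (2), and (5) would be a greedy BFS-style algorithm on $G$ (resp.\ $S(G)$): fix a root, process vertices in breadth-first order, and at each step assign to the newly reached vertex (or its incident edge) a color chosen so that any coloring-preserving automorphism is forced to fix that vertex. The $\lceil\sqrt{\Delta(G)}\rceil$ bound in (1) and (2) reflects the fact that a palette of size $k$ yields $k^{2}$ distinct (vertex color, incident-edge color) signatures, which suffices as long as $k^{2}\geq \Delta(G)$; in (2) this same encoding can be installed either on the subdivision vertices $V_{2}$ as a vertex coloring, or on the edges of $S(G)$ using ordered pairs to encode the $D'(G)$ edge colors of $G$. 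Item (5) is the edge-analogue of the same greedy principle, with $C_{3},C_{4},C_{5}$ appearing as exceptional cases where the two-color palette is too small.

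Items (3) and (4) are more concrete. For (3), one exhibits an explicit $2$-edge-coloring of $C_{t}$ and $K_{t}$ for $t\geq 6$ with a single ``defect'' pattern asymmetric under all dihedral symmetries; this already suffices to break the automorphism group. For (4), the canonical bijection between $E(G)$ and $V(L(G))$ turns an edge coloring of $G$ into a vertex coloring of $L(G)$, and Sabidussi's isomorphism $Aut(G)\cong Aut(L(G))$ from Fact~\ref{Fact 2.7}(2) implies that the stabilizer of an edge coloring of $G$ coincides with the stabilizer of the corresponding vertex coloring of $L(G)$, giving $D(L(G))=D'(G)$ outside the two small exceptions $P_{2}$ and $L(Q)$. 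The main ``obstacle'' is purely curatorial: each part has a complete published proof in the cited source, so the work consists in choosing the right references rather than producing new mathematics.
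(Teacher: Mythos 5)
Your proposal matches the paper exactly: Fact~\ref{Fact 3.1} is stated without proof, as a collection of externally attributed results from \cite{KPW2016}, \cite{AS2017}, \cite{KP2015}, and \cite{AS2020}, used only as a black box in Theorems~\ref{Theorem 3.2}, \ref{Theorem 3.4}, and \ref{Theorem 3.6}. Your additional sketches of the underlying arguments are reasonable but not required, since the paper itself offers no proof beyond the citations.
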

We require Lemmas \ref{Lemma 2.1}, \ref{Lemma 2.2}, and \ref{Lemma 2.3}, together with Propositions \ref{Proposition 3.2} and \ref{Proposition 3.3}, to prove the main result of this section, namely Theorem \ref{Theorem 3.4}.
\begin{prop}\label{Proposition 3.2}
    {\em For any integer $n\geq 4$, we have $D'(C(K_n)) = 2$.}
\end{prop}

\begin{proof}
By Lemma \ref{Lemma 2.3}, we have $\operatorname{Aut}(C(K_{n}))\cong \operatorname{Aut}(K_{n})\cong S_{n}$. Thus, $|\operatorname{Aut}(C(K_{n}))|=n!>1$ and so $C(K_{n})$ is not asymmetric. Consequently, $D'(C(K_{n}))>1$ and it suffices to construct an edge coloring $c : E(C(K_n)) \to \{1,2\}$ such that the only automorphism of $C(K_{n})$ that preserves $c$ is $id_{C(K_{n})}$.
Let $V(K_n)=\{v_1,\dots,v_n\}$.
Recall that for each edge $v_i v_j$ of $K_n$,  $w_{v_iv_j}$ is the subdivision vertex 
introduced on that edge.
Define the edge coloring $c: E(C(K_n)) \to \{1, 2\}$ such that for each $v_i \in V_1$, the incident edges $v_i w_{v_{i}v_{j}}$ are colored $2$ if $j < i$ and $1$ if $j > i$ (see Figure \ref{Figure 2}).
Let $\varphi \in \mathrm{Aut}(C(K_n))$ be an automorphism that preserves $c$. For each $i\in \{1,...,n\}$, let $d_{2}(v_{i})$
    denote the number of edges incident to $v_{i}$ that are colored with $2$. By construction,
    $
        d_2(v_i) = i-1 \text{ for all } i\in \{1,\dots,n\}.
    $
    Hence, the values $d_2(v_i)$ are distinct. Since $\varphi$ 
    preserves $c$, we have 
    $\varphi(v_{i})=v_i$ for all $v_{i}\in V(K_{n})$. Thus, by Lemma \ref{Lemma 2.1}, $\varphi=id_{C(K_{n})}$.
\end{proof}

\begin{figure}[h]
\centering
\begin{tikzpicture}[scale=0.81]

\draw[rounded corners] (-1,2.9) rectangle (1,-0.9);
\node at (-1.5,2.6) {$V_1$};

\fill (0,2.5) circle (1.5pt) node[left=4pt] {$v_1$};
\fill (0,1.5) circle (1.5pt) node[left=4pt] {$v_2$};
\node at (0,0.5) {$\vdots$};
\fill (0,-0.5) circle (1.5pt) node[left=4pt] {$v_n$};

\draw[rounded corners] (4,2.9) rectangle (7,-0.9);
\node at (7.5,2.6) {$V_2$};

\fill (5.2,2.5) circle (1.5pt) node[right=4pt] {$w_{v_1v_2}$};
\node at (5.5,1.9) {$\vdots$};
\fill (5.2,1.2) circle (1.5pt) node[right=4pt] {$w_{v_1v_n}$};
\node at (5.5,0.6) {$\vdots$};
\fill (5.2,-0.2) circle (1.5pt) node[right=4pt] {$w_{v_2v_n}$};

\draw[dashed, thick] (0,2.5) -- (5.2,2.5);
\draw[dashed, thick] (0,2.5) -- (5.2,1.2);

\draw[thick] (0,1.5) -- (5.2,2.5);
\draw[dashed, thick] (0,1.5) -- (5.2,-0.2);

\draw[thick] (0,-0.5) -- (5.2,1.2);
\draw[thick] (0,-0.5) -- (5.2,-0.2);

\end{tikzpicture}
\caption{\em In $C(K_n)$, the edges $v_i w_{v_iv_j}$ are colored $2$ (bold black) if $j<i$ and $1$ (dashed) if $j>i$.}
\label{Figure 2}
\end{figure}
\begin{prop}\label{Proposition 3.3}
{\em For any integer $n \geq 4$, we have  $D'(C(C_n)) = 2$.}
\end{prop}

\begin{proof}
Since $\operatorname{Aut}(C(C_{n}))\cong \operatorname{Aut}(C_{n})\cong D_{n}$ by Lemma \ref{Lemma 2.3} where $D_{n}$ is the Dihedral group, the graph $C(C_{n})$ is not asymmetric, and thus $D'(C(C_n)) > 1$. We show $D'(C(C_n)) \leq 2$.
Let $V(C(G)) = V_1 \cup V_2$, where $V_1=V(C_{n})$ and $V_2 = \{w_{xy} : xy \in E(C_{n})\}$ be the set of subdivision vertices. 
Since $n\geq 4$, we can select distinct $v_1, v_2, v_3 \in V_1$ so that $v_1v_2, v_2v_3 \in E(C_n)$. Define an edge coloring $c: E(C(C_n)) \to \{0, 1\}$ as follows (see Figure \ref{Figure 3}): 

\begin{enumerate}
    \item $c(v_1 w_{v_1v_2}) = 1$ and $c(w_{v_1v_2} v_2) = 1$.
    \item $c(v_3 w_{v_2v_3}) = 1$.
    \item $c(e) = 0$ otherwise.
\end{enumerate}

Let $\phi\in \operatorname{Aut}(C(C_{n}))$ be an automorphism that preserves $c$. Let $d_1(u)$ denote the number of edges incident to $u$ colored with $1$. We can observe the following:

\begin{itemize}
    \item The vertices $w_{v_1v_2}$ and $w_{v_2v_3}$ are the unique vertices with $d_1(w_{v_1v_2}) = 2$ and $d_1(w_{v_2v_3}) = 1$. 
    So $\phi(w_{v_1v_2}) = w_{v_1v_2}$ and $\phi(w_{v_2v_3}) = w_{v_2v_3}$. 
    Since automorphism preserves neighbors, we have $\phi(\{v_1, v_2\}) = \{v_1, v_2\}$ and $\phi(\{v_2, v_3\}) = \{v_2, v_3\}$.

    \item Among the neighbors $v_2$ and $v_3$ of $w_{v_2v_3}$, $v_3$ is the only vertex with $d_1(v_3) = 1$.\footnote{note that $d_1(v_2)$ also is $1$, but $v_2 \in N(w_{v_1v_2})$} 
    Thus $\phi(v_3) = v_3$. Consequently, $\phi(v_2) = v_2$. Since $\phi(\{v_1, v_2\}) = \{v_1, v_2\}$, we have $\phi(v_1) = v_1$.
\end{itemize}

Since $\phi$ fixes three consecutive vertices $v_1,v_2,$ and $v_3$ of the cycle $C_{n}$, $\phi$ must fix all vertices of $C_{n}$. By Lemma \ref{Lemma 2.1},  $\phi=id_{C(C_{n})}$. 
\end{proof}
\begin{figure}[h]
\centering
\begin{tikzpicture}[scale=0.96]

\draw[rounded corners] (-1,2.7) rectangle (1,-1);
\node at (-1.5,2.5) {$V_1$};

\fill (0,2.5) circle (1.5pt) node[below right=2pt] {$v_1$};
\fill (0,1.5) circle (1.5pt) node[below right=2pt] {$v_2$};
\fill (0,0.5) circle (1.5pt) node[below right=2pt] {$v_3$};
\node at (0,0.0) {$\vdots$};
\fill (0,-0.7) circle (1.5pt) node[right=2pt] {$v_n$};

\draw[rounded corners] (4,2.7) rectangle (6.5,-1);
\node at (6.9,2.5) {$V_2$};

\fill (5,2.0) circle (1.5pt) node[right=4pt] {$w_{v_1v_2}$};
\fill (5,1.0) circle (1.5pt) node[right=4pt] {$w_{v_2v_3}$};


\draw[dashed, thick] (0,2.5) -- (5,2.0);
\draw[dashed, thick] (5,2.0) -- (0,1.5);

\draw (0,1.5) -- (5,1.0);
\draw[dashed, thick] (5,1.0) -- (0,0.5);

\draw[bend right=50] (0,2.5) to (0,-0.7);
\draw[bend right=40] (0,1.5) to (0,-0.7);

\draw[dashed] (0,0.5) to[bend right=20] (0,-0.7);

\end{tikzpicture}
\caption{\em In $C(C_n)$, the edges $v_1w_{v_1v_2}, w_{v_1v_2}v_2$, and $v_3w_{v_2v_3}$ are colored with $1$ (dashed lines), and the rest of the edges are colored with $0$ (black lines).}
\label{Figure 3}
\end{figure}

\begin{thm}\label{Theorem 3.4}
{\em Let $G$ be a connected graph of order $n\geq 4$. Then $D'(C(G))\leq \lceil \sqrt{\Delta(G)}\rceil$. 
}
\end{thm}

\begin{proof} 
Since $G$ is connected, we have $\Delta(G)\geq 2$.
Hence,
\[
\left\lceil \sqrt{\Delta(G)} \right\rceil 
\ge 
\left\lceil \sqrt{2} \right\rceil 
= 2.
\]
For $k \geq 0$, let $S_{k}(x)$ denote a sphere of radius $k$ with a center $x$ in $G$, that is, the set of all vertices at distance $k$ from $x$. 

{\em Case (i): $G=K_{n}$ or $C_{n}$.}

By Propositions \ref{Proposition 3.2} and \ref{Proposition 3.3}, we have $D'(C(G))=2$.
Since $ \left\lceil \sqrt{\Delta(G)} \right\rceil \ge 2, $ we obtain $D'(C(G)) \le \left\lceil \sqrt{\Delta(G)} \right\rceil$.

{\em Case (ii): $G$ contains a cycle but $G\not\in\{C_{n}, K_{n}\}$.}

Thus, we can choose a vertex $v_{0}$ lying on a cycle such that the sphere
$S_{2}(v_{0})$ is non-empty. Consider a BFS-spanning tree $T$ of $G$ rooted at $v_{0}$. 
For a vertex $v\in T$, we denote

\begin{center}

$X_{v,u}=(\{v,w_{vu}\}, \{w_{vu}, u\})$ if $w_{vu} \in V_{2} \text{ subdivides the edge } \{v, u\}\in E(T)$

and

$N(v) = \{X_{v,u} : w_{vu} \in V_{2} \text{ subdivides the edge } \{v, u\}\in E(T)\}$.
\end{center}

{\em Step 1: We color all edges of the subdivision graph of $T$ and certain edges of $C(G)$.}

\begin{itemize}
    \item [] We pick $v_{1}, v_{2}\in S_{1}(v_0)$ and assign the pair $(1, 1)$ of colors to $X_{v_{0},v_{1}}$ and $X_{v_{0},v_{2}}$.
Without loss of generality, we choose $v_1\in S_1(v_0)$ such that $v_1$ has a neighbor in $S_2(v_0)$.
Since the degree of $v_{0}$ is at most $\Delta(G)$ and the pair $(1,1)$ is used twice to color the elements of $N(v_{0})$, we can color every pair of $ N(v_0)\setminus \{X_{v_{0},v_{1}},X_{v_{0},v_{2}}\}$ with different pairs of colors from $(1, 1)$.
Inspired by an algorithm due to Kalinowski--Pil\'{s}niak--Wo\'{z}niak \cite[Proof of Theorem 2.2]{KPW2016}, 
we modify the algorithm
to color all the edges of the subdivision graph of $T$ so that $v_0$ is the only vertex such that the pair $(1, 1)$ appears twice in $N(v_0)$. Hence $v_0$ will be
fixed by every color-preserving automorphism, and all vertices in $N(v_0)$
will be fixed, except possibly $v_{1}$ and $v_{2}$.
For $i\in \{1,2\}$, let 
\[T_{v_{i}}=\{X_{v_{i},u}\in N(v_i):  u \in S_{2}(v_0)\}.\]

Note that $T_{v_1}\neq\emptyset$ since $v_1$ has a neighbor in $S_2(v_0)$, while $T_{v_2}$ may possibly be empty.

\begin{figure}[!ht]
\centering
\begin{minipage}{\textwidth}
\centering

\begin{tikzpicture}[
    scale=0.87,
    vertex/.style={circle, fill=black, inner sep=1.5pt},
    box/.style={draw, rounded corners=5pt, minimum width=5.4cm, minimum height=0.8cm}
]

    \node[vertex, label=above:{$v_0$}] (v0) at (0,0) {};

    \node[box, below=2.4cm of v0] (S1) {};
    \node[below=0.1cm of S1, xshift=-0.9cm] {$S_1(v_0)$};
    
    \node[vertex, label=right:{$v_1$}] (v1) at ($(S1.center)+(-2.2,0)$) {};
    \node[vertex, label=right:{$v_2$}] (v2) at ($(S1.center)+(-0.8,0)$) {};
    \node (dotsS1) at ($(S1.center)+(2.3,0)$) {$\dots$};
    \node[vertex] (v3) at ($(S1.center)+(1.8,0)$) {};
    \node[vertex] (v4) at ($(S1.center)+(2.7,0)$) {};

    \node[vertex, label=left:{$w_{v_0v_1}$}] (w1) at ($(v0)!0.5!(v1)$) {};
    \node[vertex, label=right:{$w_{v_0v_2}$}] (w2) at ($(v0)!0.5!(v2)$) {};

    \draw (v0) -- (w1) node[midway, left] {1};
    \draw (w1) -- (v1) node[midway, left] {1};
    \draw (v0) -- (w2) node[midway, right] {1};
    \draw (w2) -- (v2) node[midway, right] {1};
    \draw (v3) -- (v0);
    \draw (v4) -- (v0);

    \node[box, below=1cm of S1] (S2) {};
    \node[below=0.1cm of S2, xshift=-0.9cm] {$S_2(v_0)$};

    \node[vertex] (u1) at ($(S2.center)+(-2.4,0)$) {};
    \node (dotsL) at ($(S2.center)+(-1.9,0)$) {$\dots$};
    \node[vertex] (u2) at ($(S2.center)+(-1.4,0)$) {};

    \foreach \u in {u1, u2} {
        \draw (\u) -- (v1); 
        \draw[dashed] (\u) to[out=155, in=195, looseness=1.7] node[pos=0.5, left] {1} (v0);
    }

    \node[vertex] (u3) at ($(S2.center)+(1.4,0)$) {};
    \node (dotsR) at ($(S2.center)+(1.9,0)$) {$\dots$};
    \node[vertex] (u4) at ($(S2.center)+(2.4,0)$) {};

    \foreach \u in {u3, u4} {
        \draw (\u) -- (v2);
        \draw[dashed] (\u) to[out=25, in=345, looseness=1.7] node[pos=0.5, right] {0} (v0);
    }

\end{tikzpicture}

\end{minipage}
\caption{\em Illustration of the coloring used to distinguish the vertices $v_1$ and $v_2$.}
\label{Figure 4}
\end{figure}
In order to distinguish $v_1$ and $v_{2}$, we need to take the following steps (see Figure \ref{Figure 4}):

\begin{itemize}
    \item[($\ast$)] We color the elements of $T_{v_{1}}$ and $T_{v_{2}}$ with the same pairs so that none of them contains the pair $(1,1)$. 

    \item[($\ast\ast$)] If $X_{v_{1},u}\in T_{v_{1}}$, then $u\in S_{2}(v_{0})$. Hence $v_{0}u\in E(\overline{G})\subseteq E(C(G))$. 
Let 
$$Z_{v_{1}}=\{v_{0}u\in E(C(G)) : X_{v_{1},u}\in T_{v_{1}}\}.$$
We color all edges of $Z_{v_{1}}$ with $1$. 

Similarly, we color all edges of the set 
$$Z_{v_{2}}=\{v_{0}u\in E(C(G)) : X_{v_{2},u}\in T_{v_{2}}\}$$ 
with $0$ (if this set is nonempty). 
In both cases (i.e., $Z_{v_2}=\emptyset$ or $Z_{v_2}\neq\emptyset$), the resulting color pattern in $C(G)$ distinguishes $v_{1}$ and $v_{2}$.

\end{itemize}
\end{itemize}

\begin{claim}\label{Claim 3.5}
{\em If $f$ is an edge coloring of $C(G)$ and $v_{0}$ is fixed by a $f$-preserving automorphism $\phi$, then $\phi(v_{i})=v_{i}$ for $i\in\{1,2\}$.}
\end{claim}

\begin{proof}
Otherwise, $\phi(v_{1})=v_{2}$. Therefore, if $u\in S_{2}(v_{0})\cap N_{T}(v_{1})$, where $N_{T}(v)$ is the open neighborhood of a vertex $v$ in the tree $T$, then $\phi(u)\in S_{2}(v_{0})\cap N_{T}(v_{2})$. Since $uv_{0}\in E(C(G))$ and $f(uv_{0})=1$, we have $\phi(u)\phi(v_{0})\in E(C(G))$ and $f(\phi(u)\phi(v_{0}))=f(\phi(u)v_{0})=1$. This contradicts $(\ast\ast)$.
\end{proof}

\begin{itemize}
    \item [] Suppose that for some integer $k$, every element $u\in \bigcup_{i\leq k} S_i(v_0)$ is colored so that it is fixed by every color-preserving automorphism. For every $u\in S_k(v_0)$, color $X_{u,z}$, where $z$ is a child of $u$ in $T$, with distinct color pairs except $(1, 1)$. Hence, all neighbors in $N(u)\cap S_{k+1}(v_0)$ will be fixed. So, the elements of $\bigcup_{i\leq k+1}S_i(v_0)$ are also fixed by every color-preserving automorphism. Continuing in this fashion, we color all the edges of $S(T)$.
\end{itemize}

{\em Step 2: We color all the remaining edges in $C(G)$ with $0$.}

{\em Step 3: Using Lemmas \ref{Lemma 2.1} and \ref{Lemma 2.2}, we prove that the coloring we defined above (say, $c$) is preserved only by $id_{C(G)}$.}
\begin{itemize}
    \item[] Let $\psi\in \operatorname{Aut}(C(G))$ be an automorphism that preserves $c$.  
By Lemma~\ref{Lemma 2.2}, $\psi(V_i)=V_i$ for $i\in\{1,2\}$. 
Since all vertices of $T$ are fixed by $\psi$ and the vertices of $T$ correspond to $V_1$, it follows that $\psi(u)=u$ for every $u\in V_1$. 
By Lemma~\ref{Lemma 2.1}, we conclude that $\psi=id_{C(G)}$.
\end{itemize}

{\em Case (iii): $G$ is a tree.}

Then $G$ has a central vertex or a central edge that is fixed by every automorphism of $G$. 

\begin{enumerate}

        \item[]\textit{Case A:} Suppose $G$ has a central vertex $v_0$. 
If $S_2(v_0)\neq\emptyset$, we consider $G$ rooted in $v_{0}$ and apply the coloring argument of Case~(ii), which fixes vertices successively according to their distance from $v_0$.
If $S_2(v_0)=\emptyset$, then $G\cong K_{1,n-1}$. 
In this case, it is known that
\[
D'(C(K_{1,n-1}))=\left\lceil \sqrt{n-1}\right\rceil
=\left\lceil \sqrt{\Delta(K_{1,n-1})}\right\rceil.
\]
Thus, the desired bound holds. 
         
        \vspace{4mm}
        \item []\textit{Case B:} If $G$ has a central edge $e_{0}=xy$, let $T_{1}$, $T_{2}$ be subtrees obtained by deleting $e_{0}$. If we put different colors on the edges $xw_{xy}$ and $w_{xy}y$, then $x$ and $y$ are fixed by every automorphism. We color $T_{i}$ using the same method as in {\em Case A} for $i\in \{1,2\}$.
\end{enumerate}
This concludes the proof of Theorem \ref{Theorem 3.4}.
\end{proof}

\begin{thm}\label{Theorem 3.6}
{\em Let $G$ be a connected graph of order $n\geq 4$. Then $D(C(G))\leq \lceil \sqrt{\Delta(G)}\rceil$. 
}
\end{thm}

\begin{proof}
In view of Fact \ref{Fact 3.1}(1), it suffices to show that $D(C(G))\leq D''(G)$. Let $f:V(G)\cup E(G)\to \{1,...,D''(G)\}$ 
be a total coloring of $G$ (not necessarily proper) that is preserved only by $id_{G}$.
We define $\Tilde{f}: V(C(G)) \to \{1,...,D''(G)\}$ so that $\Tilde{f}(x)=f(x)$ for all $x\in V_{1}$ and $\Tilde{f}(w_{xy})=f(xy)$ for all $w_{xy}\in V_{2}$. We claim that $\Tilde{f}$ is
preserved only by $id_{C(G)}$.
Let $\psi$ be a $\Tilde{f}$-preserving automorphism of $C(G)$.  
If $H = \{\pi_{\alpha}: \alpha \in \operatorname{Aut}(G)\}$ where $\pi_{\alpha}$ is the induced automorphism of 
$\alpha\in \operatorname{Aut}(G)$, which is defined in the proof of Lemma \ref{Lemma 2.3}, then $H=\operatorname{Aut}(C(G))\cong \operatorname{Aut}(G)$ (see the proof of Lemma \ref{Lemma 2.3}).
Thus, $\psi=\pi_{\alpha}$ for some $\alpha\in \operatorname{Aut}(G)$ where $\psi(V_{1})=V_{1}$ and $\psi(V_{2})=V_{2}$.
Since $\psi$ is a $\tilde{f}$-preserving automorphism of $C(G)$, we can see that $\alpha$ preserves $f$ for the following reasons:

\begin{enumerate}
    \item If $v \in V_{1}$, then
    \begin{align*}
        \tilde{f}(\psi(v)) & = \tilde{f}(v) \\
        &\implies f(\psi(v)) = f(v) \\
        &\implies f(\alpha(v)) = f(\pi_{\alpha}(v)) = f(v).
    \end{align*}

    \item If $w_{xy} \in V_{2}$, then
    \begin{align*}
        \tilde{f}(\psi(w_{xy})) & = \tilde{f}(w_{xy}) \\
        &\implies \tilde{f}(\pi_{\alpha}(w_{xy})) = f(xy) \\
        &\implies \tilde{f}(w_{\alpha(x)\alpha(y)}) = f(xy) \\
        &\implies f(\alpha(x)\alpha(y)) = f(xy) \\
        &\implies f(\alpha(xy)) = f(xy).
    \end{align*}
\end{enumerate}

   So, $\alpha=id_{G}$ since $f$ is
   preserved only by $id_{G}$.
   Consequently, $\psi=\pi_{\alpha}=id_{C(G)}$.
\end{proof}

Let $G_1 = (V_1, E_1)$ and $G_2 = (V_2, E_2)$ be two graphs. Then $G_1 \setminus G_2$ is a graph $G = (V, E)$ where $V = V_1$ and $E = E_1 \setminus E_2 = \{e \in E_1 \mid e \notin E_2\}$.

\begin{figure}[!ht]
\begin{minipage}{\textwidth}
\centering
\begin{tikzpicture}[scale=1.24]

\draw[black,] (0,0) -- (-2,-2); 
\draw[black,] (0,0) -- (-1,-2);
\draw[black,] (0,0) -- (1,-2);
\draw[black,] (0,0) -- (2,-2); 
\draw[black,] (-2,-2) -- (2,-2); 

\draw (-2,-2) [bend right=20] to (1,-2);
\draw (-2,-2) [bend right=20] to (2,-2);
\draw (-1,-2) [bend right=20] to (2,-2);

\node[circ] at (0,0) {$1$};
\node[circ] at (-2,-2) {$1$};
\node[circ] at (-1,-2) {$2$};
\node[circ] at (1,-2) {$1$};
\node[circ] at (2,-2) {$2$};

\node[circ] at (-1,-1) {$1$};
\node[circ] at (-0.5,-1) {$1$};
\node[circ] at (0.5,-1) {$2$};
\node[circ] at (1,-1) {$2$};

\draw[black,] (7,0) -- (3,-2); 
\draw[black,] (7,0) -- (5.5,-2);
\draw[black,] (7,0) -- (7,-2);
\draw[black,] (7,0) -- (8.5,-2);
\draw[black,] (7,0) -- (11,-2);  

\draw[black,] (3,-2) -- (11,-2); 

\draw (3,-2) [bend right=20] to (7,-2);
\draw (3,-2) [bend right=20] to (8.5,-2);
\draw (3,-2) [bend right=20] to (11,-2);
\draw (5.5,-2) [bend right=20] to (8.5,-2);
\draw (5.5,-2) [bend right=20] to (11,-2);
\draw (7,-2) [bend right=20] to (11,-2);

\node[circ] at (7,0) {$v_{1}$};
\node[circ] at (3,-2) {$v_{2}$};
\node[circ] at (5.5,-2) {$v_{3}$};
\node[circ] at (7,-2) {$v_{4}$};
\node[circ] at (8.5,-2) {$v_{5}$};
\node[circ] at (11,-2) {$v_{6}$};

\node[circ] at (5.1,-1) {$t_{2}$};
\node[] at (5.6,-0.5) {$3$};
\node[] at (3.6,-1.5) {$3$};

\node[circ] at (6.2,-1) {$t_{3}$};
\node[] at (6.4,-0.5) {$2$};
\node[] at (5.6,-1.5) {$2$};

\node[circ] at (7,-1) {$t_{4}$};
\node[] at (7.1,-0.5) {$2$};
\node[] at (6.8,-1.5) {$1$};

\node[circ] at (7.7,-1) {$t_{5}$};
\node[] at (7.6,-0.5) {$1$};
\node[] at (8.3,-1.5) {$2$};

\node[circ] at (9,-1) {$t_{6}$};
\node[] at (8.3,-0.5) {$1$};
\node[] at (10.5,-1.5) {$1$};

\node at (7,-3.3) {We color all the edges of $C(K_{1,5})\backslash S(K_{1,5})$ with $1$};
\end{tikzpicture}    

\end{minipage}
\caption{\em A vertex coloring of $C(K_{1,4})$ with $2=\lceil \sqrt{\Delta(K_{1,4})}\rceil$ colors and an edge coloring of $C(K_{1,5})$ with $3=\lceil \sqrt{\Delta(K_{1,5})}\rceil$ colors that are only preserved by trivial automorphisms.}
\label{Figure 5}
\end{figure}

\begin{remark}\label{Remark 3.7} For any integer $n\geq 1$, we have \[ D'(C(K_{1,n}))=D(C(K_{1,n}))=\lceil \sqrt{n}\rceil =\lceil \sqrt{\Delta(K_{1,n})}\rceil. \] Thus, there exist graphs for which the bounds mentioned in Theorems \ref{Theorem 3.4} and \ref{Theorem 3.6} cannot be improved (see Figure \ref{Figure 5}). \end{remark}

We apply Lemmas \ref{Lemma 2.6} and \ref{Lemma 2.7} to prove the following.

\begin{thm}\label{Theorem 3.8}
{\em Let $G$ be a connected graph of order $n$. The following holds:

\begin{enumerate}
    \item If $n\geq 3$, then $D(M(G))\leq \Delta(G)$. Moreover, the bound is sharp.
    \item $D'(M(G))\leq 3$ for any positive integer $n$.
\end{enumerate}
}
\end{thm}

\begin{proof}
Let $E$ be the set of all endline edges of $G^{+}$.

(1). By Facts \ref{Fact 3.1}(4) and \ref{Fact 2.5} (1), $D(M(G))=D(L(G^{+}))=D'(G^{+})$. 

Case(i): $G\not\cong C_{n}$ for any  $n\geq 3$. 
We show that $D'(G^{+})\leq D'(G)$. Then $D'(G^{+})\leq \Delta(G)$ by Fact \ref{Fact 3.1}(5).
Let $g:E(G)\rightarrow \{1,...,D'(G)\}$ be a coloring of $E(G)$ that is
preserved only by $id_{G}$. Define the mapping $g':E(G^{+})\rightarrow \{1,...,D'(G)\}$ as follows:

\begin{center}
    $g'(e) =
    \begin{cases} 
				1 & \text{if}\, e\in E, \\
				
				g(e) & \text{if} \, e\not\in E.
   \end{cases}$
\end{center}

We show that $g'$ is preserved only by $id_{G^{+}}$. If $\psi\in \operatorname{Aut}(G^{+})$ preserves $g'$, then $\psi$ maps edges from $E$ to $E$ by Lemma \ref{Lemma 2.7}. So, $\psi\restriction G\in \operatorname{Aut}(G)$ preserves $g$. 
Thus, $\psi\restriction G=id_{G}$. By Lemma \ref{Lemma 2.6}, $\psi=id_{G^{+}}$.

Case(ii): $G\cong C_{n}$ for some $n\geq 3$. 

\begin{claim}\label{Claim 3.9}
    $D'(G^{+})=2$.
\end{claim}

\begin{proof}
Let $U_{1}=V(G)=\{v_1,\ldots,v_n\}$ and 
$U_{2}=V(G^{+})\setminus V(G)=\{u_1,\ldots,u_n\}$, 
where $u_i$ is the pendant vertex adjacent to $v_i$. 
Since the graph $G^{+}$ admits a nontrivial automorphism, 
any edge coloring with one color is preserved by a non-identity automorphism. 
Hence $D'(G^{+})\ge 2$. Thus, it is enough to show that $D'(G^{+})\le 2$.
Define a coloring $c:E(G^{+})\to\{1,2\}$ as follows:
\[
c(e)=
\begin{cases}
2 & \text{if } e=v_1v_2 \text{ or } e=v_1u_1,\\
1 & \text{otherwise}.
\end{cases}
\]

\begin{figure}[h]
\centering

\begin{tikzpicture}[scale=0.5,rotate=45]

\tikzset{
  blackdot/.style={
    circle,draw,fill=black,
    minimum size=6pt,inner sep=0pt
  },
  whitedot/.style={
    circle,draw,fill=white,
    minimum size=8pt,inner sep=0pt
  }
}

\def\r{3}
\def\L{1.35}


\draw[thick] (150:\r) arc (150:0:\r);
\draw[thick] (0:\r) arc (0:-34:\r);
\draw[thick] (-58:\r) arc (-58:-122:\r);
\draw[thick] (-142:\r) arc (-142:-210:\r);


\foreach \a in {-40,-44,-48}
  \fill (\a:\r) circle (1.2pt);

\foreach \a in {-127,-132,-137}
  \fill (\a:\r) circle (1.2pt);



\node[blackdot,label={[yshift=-1mm]below:$v_1$}] (v1) at (90:\r) {};
\node[whitedot,label=left:$u_1$] (u1) at (90:\r+\L) {};

\draw[thick] (v1)--(u1);

\node[font=\bfseries] at ($(v1)!0.55!(u1)+(0.39,0)$) {$2$};


\node[blackdot,label={below:$v_2$}] (v2) at (45:\r) {};
\node[whitedot,label={[xshift=1mm]right:$u_2$}] (u2) at (45:\r+\L) {};

\draw[thick] (v2)--(u2);

\node[font=\bfseries] at (67.5:3.45) {$2$};


\node[blackdot,label={[yshift=-1mm]below:$v_3$}] (v3) at (0:\r) {};
\node[whitedot,label=right:$u_3$] (u3) at (0:\r+\L) {};

\draw[thick] (v3)--(u3);


\node[blackdot,label={[xshift=-1mm,yshift=1mm]left:$v_p$}] (vp) at (-55:\r) {};
\node[whitedot,label=right:$u_p$] (up) at (-55:\r+\L) {};

\draw[thick] (vp)--(up);


\node[blackdot,label={[xshift=1mm]right:$v_n$}] (vn) at (150:\r) {};
\node[whitedot,label=left:$u_n$] (un) at (150:\r+\L) {};

\draw[thick] (vn)--(un);

\end{tikzpicture}

\caption{\em In $G^{+}$, the edges $v_1u_1$ and $v_1v_2$ are colored $2$. All other edges are colored $1$.}
\label{Figure 2}
\end{figure}

We show that $c$ is a distinguishing edge coloring.

\begin{itemize}
    \item Let $\psi\in \mathrm{Aut}(G^{+})$ be a $c$-preserving automorphism. 
    By Lemma~\ref{Lemma 2.7}, $\psi(U_i)=U_i$ for $i=1,2$. 
    Hence $\psi$ maps $E(G)$ to $E(G)$ and the set of endline edges $E$ to itself.
    
    \item The edge $v_1u_1$ is the unique endline edge colored $2$. 
    Therefore $\psi(v_1u_1)=v_1u_1$. 
    Since $u_1$ is the only vertex of degree $1$ incident with this edge, 
    it follows that $\psi(u_1)=u_1$ and consequently $\psi(v_1)=v_1$.
    
    \item The edge $v_1v_2$ is colored differently from any other edge in the cycle graph $G$. 
    Thus $\psi(v_1v_2)=v_1v_2$. 
    Since $\psi(v_1)=v_1$, it follows that $\psi(v_2)=v_2$.

    \item For $v_{p}\in V(G)\setminus \{v_{1}, v_{2}\}$, 
    let $P_{v_{1}, v_{2}, v_{p}}$ denote the unique path in the cycle 
    between $v_1$ and $v_p$ that contains $v_2$. 
    Now, $\psi$ fixes all the vertices of $G$ since
for any $v_{p}, v_{q}\in V(G)\backslash \{v_{1}, v_{2}\}$ such that $v_{p}\neq v_{q}$, the length of $P_{v_{1}, v_{2}, v_{p}}$ is different from the length of $P_{v_{1}, v_{2}, v_{q}}$.
    \item Thus, $\psi(u)=u$ for every $u\in U_{1}$. 
By Lemma~\ref{Lemma 2.6}, we conclude that $\psi=id_{G^{+}}$. 
\end{itemize}

Hence, $D'(G^{+})\le 2$.
This concludes the proof of the claim.
\end{proof}

Thus, $D(M(G))=D'(G^{+})=2=\Delta(G)$. 
This shows that the bound mentioned in (1) is sharp.

(2). Since every line graph is claw-free, and the distinguishing index of a connected claw-free graph is at most $3$ (see 
\cite[Theorem 3.4]{Pil2017}), we have $D'(M(G))=D'(L(G^{+}))\leq 3$.
\end{proof}
\section{Total distinguishing chromatic number} 

\begin{defn}\label{Definition 4.1}
A {\em Latin square} of order $k$ is a $k \times k$ array based on the elements $1, 2,...,k$ such that each element occurs exactly once in each row and exactly once in each column. A Latin square $M = [m_{i,j}]$ of order $k$ is said to be {\em commutative} if $m_{i,j} = m_{j,i}$, for $1 \leq i, j \leq k$ and $M$ is said to be {\em idempotent} if $m_{i,i} = i$, for $1 \leq i \leq k$. If the rows of $M$ are just cyclic permutations (one shift of the elements to the left) of the previous row, then $M$ is said to be {\em anti-circulant}.
\end{defn}

\begin{center}
      \begin{tabular}{|l|l|l|l|l|l|l|l|}
        \hline
              1 & 5 & 2 & 6 & 3 & 7 & 4 \\
             \hline
              5 & 2 & 6 & 3 & 7 & 4 & 1 \\
             \hline
              2 & 6 & 3 & 7 & 4 & 1 & 5 \\
             \hline
              6 & 3 & 7 & 4 & 1 & 5 & 2 \\
             \hline
             3 & 7 & 4 & 1 & 5 & 2 & 6 \\
             \hline
             7& 4 & 1 & 5 & 2 & 6 & 3 \\
             \hline
             4 & 1 & 5 & 2 & 6 & 3 & 7 \\
        \hline
    \end{tabular}
    \\
Table 1. An anti-circulant idempotent commutative Latin square of order $7$.
    \label{tab:my_label}
\end{center}  

\begin{defn}\label{Definition 4.2}
A {\em proper edge coloring} of a graph $G=(V(G), E(G))$ is a coloring of $E(G)$ so that any two adjacent edges have different colors.
The {\em chromatic index} of $G$, denoted by $\chi'(G)$, is the least number of colors needed for a proper edge coloring of $G$. 
Fix an integer $k$. We say that $G$ is {\em $k$-edge choosable} if for any assignment $L = {L(e)}_{e\in E(G)}$ of the lists of available colors to the edges of $G$, there is a proper edge coloring $f$ of $G$ so that $f(e) \in L(e)$ and $\vert L(e)\vert=k$ for all $e\in E(G)$. The {\em list chromatic index}
of $G$, denoted by $\chi_{L}'(G)$, is the least integer $k$ such that $G$ is $k$-edge choosable. 
\end{defn}

\begin{fact}\label{Fact 4.3}
{\em The following holds:
\begin{enumerate}
    
    \item (Galvin; \cite{Gal1995}) If $G$ is a bipartite graph, then $\chi'(G)=\chi_{L}'(G)=\Delta(G)$.
    
    \item An idempotent commutative Latin square (ICLS) of order $k$ exists if and only if $k$ is odd.
    
    \item  If $M(k) = [m_{i,j}]$ is a Latin square where $m_{i,j} \equiv (i + j)k$ (mod $2k - 1$), $1 \leq m_{i,j} \leq 2k - 1$, for $1 \leq i, j \leq 2k - 1$, then $M(k)$ is an anti-circulant ICLS of order $2k - 1$. 
    
    \item (Xie--He; \cite[Theorem 1]{XH2005}) If $G$ is a regular graph of even order $n$ and $\Delta(G)\geq \frac{2n}{3}+ \frac{23}{6}$, then $\chi''(G)\leq \Delta(G)+2$.
    
    \item (Kalinowski--Pil\'{s}niak--Wo\'{z}niak; {\cite[Theorem 4.2]{KPW2016}})\label{Theorem 1.4} Every connected graph $G$ satisfies the inequality $\chi''_{D}(G)\leq \chi''(G)+1$. Furthermore, $\chi_{D}''(G)=\chi''(G)$ if $\chi''(G)\geq \Delta(G)+2$.

    \item (Panda--Verma--Keerti~\cite[Section 3]{PVK2020})
    \label{Theorem 1.3} $\chi''(C(G))\leq \Delta(C(G))+2$ for any graph $G$.
\end{enumerate}
}
\end{fact}

\subsection{Central graphs}

\begin{prop}\label{Proposition 4.4}
{\em If $G$ is a connected graph, then $\chi''_{D}(C(G))$ is $\Delta(C(G))+1$ or $\Delta(C(G))+2$.}   
\end{prop}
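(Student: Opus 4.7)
The plan is to sandwich $\chi''_D(C(G))$ between $\Delta(C(G))+1$ and $\Delta(C(G))+2$ by combining the general lower bound for the total chromatic number with Theorem \ref{Theorem 1.3} and Fact \ref{Fact 4.3}(5).

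For the lower bound, I would first invoke the standard inequality $\chi''(H) \geq \Delta(H)+1$, which holds for any graph $H$: a vertex $v$ of maximum degree together with its $\Delta(H)$ incident edges forms a set of $\Delta(H)+1$ mutually adjacent/incident elements, all of which must receive distinct colors in a proper total coloring. Since any proper total coloring preserved only by the identity is, in particular, a proper total coloring, we have $\chi''_D(C(G)) \geq \chi''(C(G)) \geq \Delta(C(G))+1$.

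For the upper bound, Theorem \ref{Theorem 1.3} gives $\chi''(C(G)) \leq \Delta(C(G))+2$, so $\chi''(C(G)) \in \{\Delta(C(G))+1,\Delta(C(G))+2\}$. I would then split on these two cases using Fact \ref{Fact 4.3}(5). If $\chi''(C(G)) = \Delta(C(G))+2$, then because $\chi''(C(G)) \geq \Delta(C(G))+2$, the second clause of Fact \ref{Fact 4.3}(5) yields $\chi''_D(C(G)) = \chi''(C(G)) = \Delta(C(G))+2$. If instead $\chi''(C(G)) = \Delta(C(G))+1$, then the first clause of Fact \ref{Fact 4.3}(5) gives $\chi''_D(C(G)) \leq \chi''(C(G))+1 = \Delta(C(G))+2$. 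In either case we obtain $\chi''_D(C(G)) \leq \Delta(C(G))+2$.

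Combining the two bounds, $\Delta(C(G))+1 \leq \chi''_D(C(G)) \leq \Delta(C(G))+2$, which forces $\chi''_D(C(G))$ to equal either $\Delta(C(G))+1$ or $\Delta(C(G))+2$. There is no real obstacle here; the argument is a direct assembly of the earlier results, and the only mild subtlety is to make sure to apply the correct clause of Fact \ref{Fact 4.3}(5) in each case (so that when $\chi''$ already meets the upper limit $\Delta+2$ we do not spuriously add another $+1$).
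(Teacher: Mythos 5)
Your proof is correct and follows essentially the same route as the paper: the lower bound $\chi''_D(C(G))\geq \chi''(C(G))\geq \Delta(C(G))+1$ combined with Theorem \ref{Theorem 1.3} and Fact \ref{Fact 4.3}(5) for the upper bound. The paper states this more tersely, but your explicit case split on the value of $\chi''(C(G))$ is exactly the intended application of Fact \ref{Fact 4.3}(5).
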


\begin{proof}
It is evident that $\Delta(G)+1\leq \chi''(G)\leq \chi''_{D}(G)$ for any graph $G$. Thus, $\Delta(C(G))+1 \le \chi''(C(G)) \le \chi''_{D}(C(G))$. It suffices to show that $\chi''_{D}(C(G)) \leq \Delta(C(G)) + 2$. By Fact \ref{Fact 4.3}(6), we have $\chi''(C(G)) \le \Delta(C(G)) + 2$. If $\chi''(C(G)) = \Delta(C(G)) + 1$, then $\chi''_{D}(C(G))\leq \chi''(C(G))+1= \Delta(C(G))+2$ by Fact~\ref{Fact 4.3}(5). On the other hand, if $\chi''(C(G)) = \Delta(C(G)) + 2$, then by the second statement of Fact~\ref{Fact 4.3}(5), we obtain $\chi''_{D}(C(G)) = \chi''(C(G)) = \Delta(C(G)) + 2$.
\end{proof}

\begin{thm}\label{Theorem 4.5}
    {\em Let $G$ be a connected regular graph of order $n\geq 5$. The following holds:
    \begin{enumerate}
        \item If $n$ is odd, then $\chi''_{D}(C(G))=\Delta(C(G))+1$. 
        \item If $n$ is even, $G\not\cong K_{n}$, and $\Delta(G)\leq \frac{2n-29}{6}$, then $\chi''_{D}(C(G))=\Delta(C(G))+1$.
    \end{enumerate}
Consequently, $C(G)$ is Type 1 if $G$ satisfies the requirement of (1) or (2).
    } 
\end{thm}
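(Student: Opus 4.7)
The plan is to establish the lower bound $\chi''_{D}(C(G))\geq \chi''(C(G))\geq \Delta(C(G))+1=n$ and then, in each case, construct a proper total coloring $f$ of $C(G)$ with exactly $n$ colors whose only color-preserving automorphism is $id_{C(G)}$. The uniform distinguishing device is the following: any $f$-preserving $\psi\in Aut(C(G))$ sends $V_{1}$ to $V_{1}$ by Lemma \ref{Lemma 2.4}, so $\alpha=\psi|_{V_{1}}$ is an automorphism of the subgraph of $C(G)$ induced on $V_{1}$, namely $\overline{G}$. If $f$ restricted to $V_{1}\cup E(\overline{G})$ is itself distinguishing on $\overline{G}$, then $\alpha=id_{\overline{G}}$, hence $\psi$ fixes $V_{1}$ pointwise, and Lemma \ref{Lemma 2.3} then forces $\psi=id_{C(G)}$.

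For case (1), $n$ odd, I would fix an ICLS $M=[m_{i,j}]$ of order $n$ given by Fact \ref{Fact 4.3}(2), and set $f(v_{i})=m_{i,i}=i$ for $i=1,\dots,n$ together with $f(\{v_{i},v_{j}\})=m_{i,j}$ for every $\{v_{i},v_{j}\}\in E(\overline{G})$. The Latin property of row $i$, combined with $m_{i,i}=i$ and the commutativity $m_{i,j}=m_{j,i}$, makes this a proper partial total coloring in which the set of colors consumed at $v_{i}$ is exactly $\{m_{i,k}:v_{k}\notin N_{G}(v_{i})\}$, of size $n-d$. Hence the residual set $R(v_{i})=\{m_{i,j}:v_{j}\in N_{G}(v_{i})\}$ has size $d=\Delta(G)$. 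Now $S(G)$ is bipartite with $\Delta(S(G))=d$; assigning to every subdivision edge $\{v_{i},w_{ij}\}$ the list $L(\{v_{i},w_{ij}\})=R(v_{i})$ of size $d$, Galvin's list-edge-coloring theorem (Fact \ref{Fact 4.3}(1)) delivers a proper edge coloring of $S(G)$ which, at each $v_{i}$, uses each element of $R(v_{i})$ exactly once. Each $w_{ij}$ is then coloured from $\{1,\dots,n\}\setminus\{i,j,f(\{v_{i},w_{ij}\}),f(\{v_{j},w_{ij}\})\}$, a set of cardinality at least $n-4\geq 1$. The distinctness of the $V_{1}$-colors $1,\dots,n$ completes the distinguishing argument outlined above.

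For case (2), $n$ even, no ICLS of order $n$ exists, so I would outsource the coloring of $\overline{G}$ to a previously proven result. The hypothesis $\Delta(G)\leq (2n-29)/6$ is equivalent to $\Delta(\overline{G})=n-1-d\geq 2n/3+23/6$, whence Fact \ref{Fact 4.3}(4) applied to the regular even-order graph $\overline{G}$ gives $\chi''(\overline{G})\leq \Delta(\overline{G})+2=n+1-d$; the KPW bound Fact \ref{Fact 4.3}(5) then upgrades this to $\chi''_{D}(\overline{G})\leq \chi''(\overline{G})+1\leq n+2-d\leq n$, the last step using $d\geq 2$ (a connected regular graph of order $n\geq 5$ has degree at least $2$) and $G\not\cong K_{n}$ (so $\overline{G}$ actually has edges). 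Fix such a distinguishing proper total coloring $g$ of $\overline{G}$ using at most $n$ colors drawn from $\{1,\dots,n\}$, declare $f=g$ on $V_{1}\cup E(\overline{G})$, and observe that at every $v_{i}$, $g$ has consumed exactly $n-d$ colors, so $R(v_{i})=\{1,\dots,n\}\setminus(\{g(v_{i})\}\cup\{g(\{v_{i},v_{k}\}):v_{k}\in N_{\overline{G}}(v_{i})\})$ has size $d$. Galvin's theorem once again extends $f$ to $S(G)$ via the lists $L(\{v_{i},w_{ij}\})=R(v_{i})$ of size $d$, and the $w_{ij}$'s are coloured exactly as before.

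The main obstacle is case (2), where two non-trivial ingredients have to cooperate: the numerical condition $\Delta(G)\leq (2n-29)/6$ must drive $\overline{G}$ into the regime of Fact \ref{Fact 4.3}(4), and the KPW upgrade to a distinguishing total coloring must not push the palette past $n$ colors, which is precisely where $d\geq 2$ is used. After that, the distinguishing conclusion no longer comes from mere distinctness of $V_{1}$-colors (as in case (1)) but from the fact that $Aut(C(G))$ descends to $Aut(\overline{G})$ via the structural Lemmas \ref{Lemma 2.3} and \ref{Lemma 2.4}. In both cases, once the residual lists at $V_{1}$-vertices have size $d$, the Galvin step and the subsequent $V_{2}$-vertex coloring are routine.
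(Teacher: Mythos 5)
Your proposal is correct, and its overall architecture matches the paper's: lower-bound trivially, build a distinguishing proper total coloring of $\overline{G}$, extend across the subdivision edges via Galvin's list edge-coloring theorem, color the degree-$2$ vertices greedily, and close with Lemmas \ref{Lemma 2.4} and \ref{Lemma 2.3}. Part (1) is essentially identical to the paper's Claims \ref{Claim 4.6} and \ref{Claim 4.7} (same ICLS, same row-residual lists, same use of distinct vertex colors $1,\dots,n$ on $V_{1}$ to kill automorphisms). Where you genuinely diverge is part (2). The paper colors the subdivision edges with $\Delta(G)$ \emph{brand-new} colors on top of the palette used for $\overline{G}$, which forces a case split on whether $\chi''_{D}(\overline{G})$ equals $\Delta(\overline{G})+1$ or $\Delta(\overline{G})+2$; in the second case the total reaches $n+1$ and the paper must repair the coloring by recoloring every edge that received color $n+1$. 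You instead recycle the $d$ colors missing at each $V_{1}$-vertex as Galvin lists, exactly as in part (1); this handles both values of $\chi''_{D}(\overline{G})$ uniformly, needs only the count $n+2-d\le n$ (i.e.\ $d\ge 2$), and avoids the repair step entirely, at the cost of invoking the full list version of Galvin's theorem rather than just $\chi'(H)=\Delta(H)$ with fresh colors. One small point you share with the paper but should ideally make explicit: Fact \ref{Fact 4.3}(5) is stated for connected graphs, so one should note that $\Delta(G)\le(2n-29)/6<n/2$ forces $\overline{G}$ to be connected (a disconnected complement would force $d\ge n/2$).
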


\begin{proof}
We note that $\Delta(C(G))=n-1$. We recall that $V_{1}=V(G)$ and $V_{2}=V(C(G))\backslash V(G)$.

(1). Since $n$ is odd, $\Delta(C(G))$ is even. Thus, there exists an ICLS of order  $\Delta(C(G))$+ 1 by Fact \ref{Fact 4.3}(2).
Let $M(k) = [m_{i,j}]$ be an ICLS of order $2k - 1$ such that $m_{i,j} \equiv (i + j)k$ (mod $2k-1)$ and $m_{i,j}\in \{1,...,2k-1\}$, where $k = (\Delta(C(G))+2)/2$. 

\begin{claim}\label{Claim 4.6}
{\em There is a total coloring $g$ of $C(G)$ using $\Delta(C(G)) + 1$ colors.}
\end{claim}

\begin{proof} 
Let $V_{1}=\{u_{1},...,u_{n}\}$ and $V_{2}=\{v_{1},..., v_{q}\}$.

{\em Step 1:}
We begin by constructing a proper total coloring $\pi$ of the complement graph 
$\overline{G}$ using entries from the matrix $M(k)$. Let
\begin{itemize}
    \item $\pi(u_i) = m_{i,i}$ for all $1 \le i \le n$, 
    \vspace{2mm}
    
    \item $\pi(u_iu_j) = m_{i,j}$ for all $1 \leq i, j \leq n, i \neq j$ 
such that $u_iu_j \in E(\overline{G})$.
\end{itemize}

{\em Step 2:}
We assign a list of available colors to each edge of the bipartite graph $B = C(G) \backslash E(\overline{G})$, and then apply Galvin’s theorem on the list chromatic index of bipartite graphs (see Fact \ref{Fact 4.3}(1)) to obtain a proper edge coloring of $B$.

\begin{figure}[h]
\centering
\begin{tikzpicture}[
    scale=0.9,
    big/.style={draw, rounded corners=10pt, thick},
    small/.style={draw, rounded corners=6pt},
    vertex/.style={circle, fill, inner sep=1.2pt}
]

\node[big, minimum width=6cm, minimum height=7.5cm] (V1) at (-2,0) {};
\node at (-4.5,3.5) {$V_1$};

\node[big, minimum width=4cm, minimum height=7.5cm] (V2) at (4.5,0) {};
\node at (6,3.5) {$V_2$};

\node[small, minimum width=1.8cm, minimum height=3cm] (T) at (-2,1.5) {};
\node at (-3.7,2.6) {$T$};

\node[small, minimum width=1.8cm, minimum height=2.5cm] (R) at (-2,-2.5) {};
\node at (-3.7,-1.3) {$R$};

\node at (0.6,-3.6) {$\overline{G}$};

\node[vertex,label=above:$u_i$] (ui) at (-2,3.5) {};

\node[vertex,label=below:$u_{i_1}$] (ui1) at (-2,2.3) {};
\node at (-2,1.5) {$\vdots$};
\node[vertex,label=below:$u_{i_k}$] (uik) at (-2,0.7) {};

\node[vertex] (r1) at (-2,-1.8) {};
\node at (-2,-2.5) {$\vdots$};
\node[vertex] (r2) at (-2,-3.2) {};

\draw[bend right=40] (ui) to (r1);
\draw[bend right=55] (ui) to (r2);

\node[vertex,label=right:$w_{u_i u_{i_1}}$] (w1) at (4.5,2) {};
\node at (4.5,0.8) {$\vdots$};
\node[vertex,label=right:$w_{u_i u_{i_k}}$] (wk) at (4.5,-3.3) {};

\draw[line width=1.2pt] (ui) -- (w1);
\draw[line width=1.2pt] (ui1) -- (w1);
\draw[line width=1.2pt] (ui) -- (wk);
\draw[line width=1.2pt] (uik) -- (wk);

\end{tikzpicture}
\caption{
The figure illustrates the central graph $C(G)$. Let $N_G(u_i)=\{u_{i_1},\dots,u_{i_k}\}$ be the set of neighbors of $u_i$ in $G$, and let
$
R = V_1 \setminus \{u_{i_1}, \dots, u_{i_k}\}.
$
The edges $u_i u_{i_l}$, for $1 \le l \le k$, are subdivided by new vertices $w_{u_i u_{i_l}}$, and $u_i$ is adjacent to every vertex in $R$ in $C(G)$. The bold edges represent the edges of the bipartite graph $B$.}
\label{Figure 6}
\end{figure}

Let $X(u_{i})=\{m_{i,j}:1\leq j\leq \Delta(C(G))+1\}$ is the $i^{th}$- row of $M(k)$. We note that for each vertex $u_{i}\in V_{1}$, there are 
\begin{center}
    $\Delta(C(G))+ 1 - (deg_{\overline{G}}(u_{i})+1)=\Delta(C(G)) - deg_{\overline{G}}(u_{i})$ 
\end{center}

colors that are not used from $X(u_{i})$.
Moreover, $\Delta(C(G)) - deg_{\overline{G}}(u_{i})=\Delta(C(G)) - \Delta(\overline{G})=\Delta(B)$ since $G$ is a regular graph.
For each edge $u_{i}v_{j}\in E(B)$ such that $u_i\in V_1$ and $v_j\in V_2$, let 
\begin{center}
$L(u_{i} v_{j})=X(u_{i})-\{m_{i,j}: u_{i}u_{j}\in E(\overline{G}), 1\leq j\leq \Delta(C(G))+1\}$.    
\end{center}
Let $L=L(u_{i} v_{j})_{u_{i}v_{j}\in E(B)}$ be an assignment of lists of available colors for the edges of the bipartite graph $B$. Thus, for each $u_i v_j \in E(B)$, we have $|L(u_i v_j)| = \Delta(B)$. By Fact \ref{Fact 4.3}(1), there exists a proper edge coloring $\pi_{1}$ of $B$ such that $\pi_{1}(e)\in L(e)$ for all $e\in E_{B}$.

{\em Step 3:} Since $n \geq 5$, for each $v_k = w_{u_i u_j} \in V_2$, the set 
\begin{center}
$T(v_k) = \{m_{i,1}, \dots, m_{i,n}\} \setminus \{g(v_k u_i), g(v_k u_j), g(u_i), g(u_j)\}$     
\end{center}

is non-empty as $|T(v_k)| \geq n - 4 \geq 1$. We define a total coloring $g$ of $C(G)$ as follows:

\begin{align*}
g(x)=
\begin{cases}
\pi_1(x) & \text{if } x=u_iv_j \in E(B),\\[2mm]
\pi(x) & \text{if } x \in V_1 \cup E(\overline{G}),\\[2mm]
c & \text{if } x=v_k=w_{u_i u_j} \in V_2, \text{ where } c \in T(v_k).
\end{cases}
\end{align*}

We can see that the coloring $g$ is a proper total coloring of $C(G)$  using $\Delta(C(G))+1$ colors.   
\end{proof}

\begin{claim}\label{Claim 4.7}
{\em $g$ is only preserved by $id_{C(G)}$.}
\end{claim}

\begin{proof}
Let $\phi\in \operatorname{Aut}(C(G))$ be such that $\phi$ preserves $g$. By Lemma \ref{Lemma 2.2}, we have $\phi(V_{1})=V_{1}$ and $\phi(V_{2})=V_{2}$. 
Define $f:=g\restriction (V(\overline{G})\cup E(\overline{G}))$ and $\psi:=\phi\restriction V_{1}$. Then $\psi\in \operatorname{Aut}(\overline{G})$.
Since $\phi$ preserves $g$, we have the following:

\begin{itemize}
    \item For all $v\in V_{1}$, $g(\phi(v))=g(v)$. Thus, $f(\phi(v))=g(\phi(v))=g(v)=f(v)$ since both $\phi(v)$ and $v$ belong to $V_{1}$.

    \item If $vv'\in E(\overline{G})$ where $v,v'\in V_{1}$, then $\phi(v)\phi(v')\in E(\overline{G})$
    where $\phi(v),\phi(v')\in V_{1}$.
    Moreover, if $N_{C(G)}(a)$ is the open neighborhood of $a \in V(C(G))$, then
        \begin{center}
            $\phi(vv')\in N_{C(G)}(\phi(v))\cap N_{C(G)}(\phi(v'))$,
        \end{center}
since automorphisms preserve adjacency. 
Thus, $\phi(v)\phi(v')=\phi(vv')$ since $G$ is a simple graph. 
Thus, we have the following:
\begin{align*}
       g(\phi(vv'))=g(vv') &\implies g(\phi(v)\phi(v'))=g(vv')\\ 
       &\implies f(\phi(v)\phi(v'))=f(vv')\\
       &\implies f(\phi(vv'))=f(vv').
\end{align*}
\end{itemize}

Thus, $\psi$ preserves $f$. 
Since $\pi(u_i) = m_{i,i} = i$ for each $1 \leq i \leq n$, and the diagonal entries in $M(k)$ are mutually distinct, all vertices in $V_1$ receive distinct colors under $f$. Consequently, the only automorphism of $\overline{G}$ that preserves $f$ is the identity automorphism.
Thus,  $\phi(u)=\psi(u)=u$ for all $u\in V_{1}$. By Lemma \ref{Lemma 2.1}, we have $\phi=id_{C(G)}$. 
\end{proof}

This concludes the proof of Theorem \ref{Theorem 4.5}(1).

(2).  
Since $\chi_{D}''(C(G))\geq \Delta(C(G))+1$, it suffices to show the following.

\begin{claim}\label{Claim 4.8}
{\em $\chi_{D}''(C(G))\leq \Delta(C(G))+1$.}
\end{claim}

\begin{proof}
Since $G$ is $\Delta(G)$-regular and $G\not\cong K_{n}$,
the complement $\overline{G}$ is $(n-1-\Delta(G))$-regular with
non-zero degree. Hence,
$
\Delta(\overline{G}) = n-1-\Delta(G).
$
By assumption,
\[
\Delta(G) \le \frac{2n-29}{6}.
\]
Therefore,
\[
\Delta(\overline{G})
= n-1-\Delta(G)
\ge n-1-\frac{2n-29}{6}=\frac{6n-6-(2n-29)}{6}=\frac{4n+23}{6}=\frac{2n}{3}+\frac{23}{6}.
\]

Thus, $\Delta(\overline{G}) \ge \frac{4n+23}{6}$.
So all the conditions of Fact~\ref{Fact 4.3}(4) are satisfied, and hence $\chi''(\overline{G})\leq \Delta(\overline{G})+2$.
Consequently, by Fact \ref{Fact 4.3}(5) and following the arguments of Proposition \ref{Proposition 4.4}, we obtain 
$\chi_{D}''(\overline{G})\in \{\Delta(\overline{G})+1, \Delta(\overline{G})+2\}$. 
Consider the bipartite subgraph $B=C(G)\backslash E(\overline{G})$ of $G$. We note that $\Delta(G)=\Delta(B)$. There are two cases to be considered.

Case(i): $\chi_{D}''(\overline{G})= \Delta(\overline{G})+1$.

Let $f_{1}$ be a proper total coloring of $\overline{G}$ with colors $1,...,\Delta(\overline{G})+1$ so that $f_{1}$ is preserved only by $id_{\overline{G}}$. By Fact \ref{Fact 4.3}(1), there exists a proper edge coloring $f_{2}$ of $B$ with $\Delta(B)=\Delta(G)$ new colors, say $\Delta(\overline{G})+2, ..., \Delta(\overline{G})+\Delta(G)+1$.
We recall that $\Delta(\overline{G})+\Delta(G)+1=n$. 
Define a proper total coloring $f_{3}$ of $C(G)$ with $n$ colors 
as follows:
\begin{itemize}
    \item $f_{3}(x)=f_{1}(x)$ for all $x\in V(\overline{G})\cup E(\overline{G})$,
    \item $f_{3}(e)=f_{2}(e)$ for all $e\in E(B)$, and
    \item if $v'=w_{u_{i}u_{j}}\in V_{2}$ is a vertex that subdivides the edge $u_{i}u_{j}\in E(G)$, then let $f_{3}(v')$ be any color of the set 
    \begin{center}
        $S(v')=\{1,...,n\}\backslash \{f_3(w_{u_{i}u_{j}}u_{i}),f_3(w_{u_{i}u_{j}}u_{j}), f_3(u_{i}), f_3(u_{j})\}$. 
    \end{center}
    
    This is possible since $S(v')\neq \emptyset$ as $n\geq 5$.  
\end{itemize}

Case(ii): $\chi_{D}''(\overline{G})= \Delta(\overline{G})+2$.

By the arguments in Case (i), let $f_{1}$ be a proper total coloring of $\overline{G}$ with colors $1,...,\Delta(\overline{G})+2$ such that $f_{1}$ is preserved only by $id_{\overline{G}}$
and $f_{2}$ be a proper edge coloring of $B$ with $\Delta(G)$ new colors, say $\Delta(\overline{G})+3,..., \Delta(\overline{G})+2+\Delta(G)$.
Now, $\Delta(\overline{G})+2+\Delta(G)=n+1$. Define a proper total coloring $f_{3}$ of $C(G)$ with $n$ colors as follows:
\begin{itemize}
    \item $f_{3}(x)=f_{1}(x)$ for all $x\in V(\overline{G})\cup E(\overline{G})$,
    \item $f_{3}(e)=f_{2}(e)$ for all $e\in E(B)$ except $f_{2}(e)= n+1$,
    
\item If $e=uv$ such that $u \in V_1$ and $v \in V_2$, and $f_2(e)=n+1$, then let $f_3(e)$ be 
any color from the set 

\begin{center}
$R(e)=\{1, \dots, \Delta(\overline{G})+2\} \setminus (\{f_3(u)\} \cup \{f_3(e') : e' \in E(\overline{G}) \text{ is incident to } u\})$. 
\end{center}

This is possible since the set $\{1, \dots, \Delta(\overline{G})+2\}$ has $\Delta(\overline{G})+2$ colors and we exclude at most $\Delta(\overline{G})+1$ colors (see Figure \ref{Figure 8}).\footnote{since $\text{deg}_{\overline{G}}(u) \leq \Delta(\overline{G})$.}
Furthermore, $f_3(e)$ is distinct from $f_3(e')$ where $e' \neq uv$ is the other edge incident to $v$, as the two colors belong to disjoint sets.
    
    \item If $v'=w_{u_{i}u_{j}}\in V_{2}$ subdivides the edge $u_{i}u_{j}\in E(G)$, then we define $f_{3}(v')$ as in Case (i).
\end{itemize}

\begin{figure}[h]
\centering
\begin{tikzpicture}[
    scale=1.14,
    transform shape,
    every node/.style={font=\small},
    box/.style={draw, rounded corners=8pt, thick, minimum width=6cm, minimum height=3cm},
    smallbox/.style={draw, rounded corners=8pt, thick, minimum width=3cm, minimum height=2cm},
    vertex/.style={circle, fill, inner sep=1.5pt}
]

\node[box] (V1) at (0,0) {};
\node[anchor=north west] at (V1.north west) {$V_1$};

\node[smallbox] (V2) at (7,0) {};
\node[anchor=north west] at (V2.north west) {$V_2$};

\node[vertex,label=above:$u$] (u) at (0,0.8) {};
\node at (0.6,1) {\footnotesize $f_3(u)$};

\node[vertex,label=below:$u_1$] (u1) at (-2.5,-0.5) {};
\node[vertex,label=below:$u_2$] (u2) at (0,-0.8) {};
\node at (0.6,-0.8) {$\cdots$};
\node[vertex,label=below:$u_k$] (uk) at (2.1,-0.5) {};

\draw (u) -- node[midway, left, yshift=2pt] {\footnotesize $f_3(uu_1)$} (u1);
\draw (u) -- node[midway, left, yshift=-4pt] {\footnotesize $f_3(uu_2)$} (u2);
\draw (u) -- node[midway, right, yshift=2pt] {\footnotesize $f_3(uu_k)$} (uk);

\node[vertex,label=above:$v$] (v) at (7.5,0) {};
\draw[line width=1.2pt] (u) -- (v);

\draw[line width=1.4pt] 
(u) -- node[midway, below=2pt, xshift=9pt] {\footnotesize $f_3(e)\in R(e)$} (v);

\end{tikzpicture}
\caption{Recoloring of an edge $e=uv$ ($u \in V_1$, $v \in V_2$) when $f_2(e)=n+1$.}
\label{Figure 8}
\end{figure}

Since $f_{1}$ is a proper total coloring of $\overline{G}$ that is preserved only by $id_{\overline{G}}$, we have that $f_{3}$ is only preserved by $id_{C(G)}$ following the arguments of Claim \ref{Claim 4.7}. Consequently, $\chi_{D}''(C(G))\leq n= \Delta(C(G))+1$.
\end{proof}
This concludes the proof of Theorem \ref{Theorem 4.5}(2).
\end{proof}

\textbf{Data availability} No data are associated with this article.

\textbf{Declarations}

\textbf{Conflict of interest} The author declares that there is no conflict of interest.

\end{document}